\newcommand{\E}{\mu}
\newcommand{\C}{\mathcal{C}}
\newcommand{\V}{\sigma^2}
\newcommand{\cE}{\E_{\chi}}
\newcommand{\cV}{\V_{\chi}}
\newcommand{\dE}{\E_{\chi^+}}
\newcommand{\dV}{\V_{\chi^+}}
\newtheorem{thm}{Theorem}[section]
\newtheorem{cor}[thm]{Corollary}
\newtheorem{defn}[thm]{Definition}
\newtheorem{prop}[thm]{Proposition}
\newtheorem{rem}{Remark}[section]
\def\ni{\noindent}
\title{\sc On Certain Colouring Parameters of Graphs}
\author{N. K. Sudev\footnote{Corresponding Author}, S. Satheesh$^{\ast\ast}$}
\affil{\small Centre for Studies in Discrete Mathematics\\ Vidya Academy of Science \& Technology \\ Thrissur - 680501, Kerala, India.\\ E-mail: $^\ast$sudevnk@gmail.com, $^{\ast\ast}$ssatheesh1963@yahoo.co.in}
\author{K. P. Chithra}
\affil{\small Naduvath Mana, Nandikkara\\ Thrissur-680301, Kerala, India.\\ E-mail: chithrasudev@gmail.com}
\author{Johan Kok}
\affil{\small Tshwane Metropolitan Police Department\\ City of Tshwane, South africa \\ E-mail: kokkiek2@tshwane.gov.za}
\date{}
\begin{document}
\maketitle

\begin{abstract}
Colouring the vertices of a graph $G$ according to certain conditions can be considered as a random experiment and a discrete random variable $X$ can be defined as the number of vertices having a particular colour in the proper colouring of $G$. In this paper, we extend the concepts of mean and variance, two important statistical measures, to the theory of graph colouring and determine the values of these parameters for a number of standard graphs.
\end{abstract}

\ni {\small \bf Key Words}: Graph Colouring; colouring sum of graphs; colouring mean; colouring variance; $\chi$-chromatic; $\chi^+$-chromatic.  

\vspace{0.2cm}

\ni {\small \bf Mathematics Subject Classification}: 05C15, 62A01.

\section{Introduction}

Investigations on graph colouring problems have attracted wide interest among researchers since its introduction in the second half of the nineteenth century. The vertex colouring or simply a colouring of a graph is an assignment of colours or labels to the vertices of a graph subject to certain conditions. In a proper colouring of a graph, its vertices are coloured in such a way that no two adjacent vertices in that graph have the same colour. 

Different types of graph colourings have been introduced during several subsequent studies. Many practical and real life situations paved paths to different graph colouring problems.  

Several researchers have also introduced various parameters related to different types of graph colouring and studied their properties extensively. The first and the most important parameter in the theory of graph colouring is the \textit{chromatic number} of graphs which is defined as the minimum number of colours required in a proper colouring of the given graph. The concept of chromatic number has been extended to almost all types of graph colourings. 

The notion of chromatic sums of graphs related to various graph colourings have been introduced and studied extensively. Some of these studies can be found in \cite{KSC1,ES1,EK1}. The notion of a general colouring sum of a graph has been explained in \cite{KSC1} as follows. 

Let $\C = \{c_1,c_2, c_3, \ldots,c_k\}$ be a particular type of proper $k$-colouring of a given graph $G$ and $\theta(c_i)$ denotes the number of times a particular colour $c_i$ is assigned to vertices of $G$. Then, the \textit{colouring sum} of a colouring $\C$ of a given graph $G$, denoted by $\omega_{\C}(G)$, is defined to be $\omega_{\C}(G) = \sum\limits_{i=1}^{k}i\,\theta(c_i)$.

Motivated by the studies on different types of graph colouring problems, corresponding parameters and their applications, we discuss the concepts of mean and variance, two important statistical parameters, to the theory of graph colouring in this paper.

For all  terms and definitions, not defined specifically in this paper, we refer to \cite{BM1,BLS,CZ, FH,EWW,DBW} and for the terminology of graph colouring, we refer to \cite{CZ1,JT1,MK1}.  For the concepts in Statistics, please see \cite{VS1,SMR1}. Unless mentioned otherwise, all graphs considered in this paper are simple, finite, connected and non-trivial.

\section{Colouring Mean and Variance of Graphs}

We can identify the colouring of the vertices of a given graph $G$ with a random experiment. Let $\C = \{c_1,c_2, c_3, \ldots,c_k\}$ be a proper $k$-colouring of $G$ and let $X$ be the random variable (\textit{r.v}) which denotes the number of vertices in $G$ having a particular colour. Since the sum of all weights of colours of $G$ is the order of $G$, the real valued function $f(i)$ defined by 
$$f(i)= 
\begin{cases}
\frac{\theta(c_i)}{|V(G)|}; &  i=1,2,3,\ldots,k\\
0; & \text{elsewhere}
\end{cases}$$ 
is the probability mass function (\textit{p.m.f})of the \textit{r.v} $X$. If the context is clear, we can also say that $f(i)$ is the \textit{p.m.f} of the graph $G$ with respect to the given colouring $\C$.

Hence, analogous to the definitions of the mean and variance of random variables, the mean and variance of a graph $G$, with respect to a general colouring of $G$ can be defined as follows.

\begin{defn}\label{Defn-1.1}{\rm 
Let $\C = \{c_1,c_2, c_3, \ldots,c_k\}$ be a certain type of proper $k$-colouring of a given graph $G$ and $\theta(c_i)$ denotes the number of times a particular colour $c_i$ is assigned to vertices of $G$. Then, the \textit{colouring mean} of a colouring $\C$ of a given graph $G$, denoted by $\E_{\C}(G)$, is defined to be $\E_{\C}(G) = \frac{\sum\limits_{i=1}^{k}i\,\theta(c_i)}{\sum\limits_{i=1}^k\theta(c_i)}$.}
\end{defn}

\begin{defn}\label{Defn-1.2}{\rm 
For a positive integer $r$, the $r$-th moment of the colouring $\C$ is denoted by $\E_{\C^r}(G)$ and is defined as $\E_{\C^r}(G) = \frac{\sum\limits_{i=1}^{k}i^r\,\theta(c_i)}{\sum\limits_{i=1}^k\theta(c_i)}$.}
\end{defn}

\begin{defn}\label{Defn-1.3}{\rm 
The \textit{colouring variance} of a colouring $\C$ of a given graph $G$, denoted by $\V_{\C}(G)$, is defined to be $\V_{\C}(G) = \frac{\sum\limits_{i=1}^{k}i^2\,\theta(c_i)}{\sum\limits_{i=1}^k\theta(c_i)}-\left(\frac{\sum\limits_{i=1}^{k}i^2\,\theta(c_i)}{\sum\limits_{i=1}^k\theta(c_i)}\right)^2$.}
\end{defn}

\subsection{$\chi$-Chromatic Mean and Variance of Graphs}

Colouring mean and variance corresponding to a particular type of minimal proper colouring of the vertices of $G$ are defined as follows.

\begin{defn}{\rm 
A colouring mean of a graph $G$, with respect to a proper colouring $\C$ is said to be a \textit{$\chi$-chromatic mean} of $G$, if $\C$ is the minimum proper colouring of $G$ and the colouring sum $\omega_G$ is also minimum. The $\chi$-chromatic mean of a graph $G$ is denoted by $\cE(G)$. }
\end{defn}

\begin{defn}{\rm 
The \textit{$\chi$-chromatic variance} of $G$, denoted by $\cV(G)$, is a colouring variance of $G$ with respect to a minimal proper colouring $\C$ of $G$ which yields the minimum colouring sum.  }
\end{defn}

Let us now determine the $\chi$-chromatic mean and variance of certain standard graph classes. The following result discusses the $\chi$-chromatic mean and variance of a complete graph $K_n$.

\begin{prop}\label{Prop-1.2}
The $\chi$-chromatic mean of a complete graph $K_n$ is $\frac{n+1}{2}$ and its $\chi$-chromatic variance is $\frac{n^2-1}{12}$.
\end{prop}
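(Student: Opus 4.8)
The plan is to first pin down the structure of every proper colouring of $K_n$, after which both parameters reduce to elementary power-sum identities. Since each pair of vertices in $K_n$ is adjacent, no colour may be repeated; hence any proper colouring uses at least $n$ colours, and because $|V(K_n)| = n$, a minimal proper colouring uses exactly $\chi(K_n) = n$ colours, each assigned to precisely one vertex. Thus $k = n$ and $\theta(c_i) = 1$ for every $i \in \{1,2,\ldots,n\}$, so $\sum_{i=1}^n \theta(c_i) = n$ and the colouring sum equals $\omega_{\C}(K_n) = \sum_{i=1}^n i = \frac{n(n+1)}{2}$ for \emph{every} minimal proper colouring. Consequently the requirement that $\omega_G$ be minimum is automatically satisfied, and the $\chi$-chromatic parameters are well defined.

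Next I would substitute these multiplicities into Definition~\ref{Defn-1.1}. The colouring mean becomes
$$\cE(K_n) = \frac{\sum_{i=1}^n i}{n} = \frac{1}{n}\cdot\frac{n(n+1)}{2} = \frac{n+1}{2},$$
using the standard identity $\sum_{i=1}^n i = \frac{n(n+1)}{2}$.

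For the variance I would apply the usual $\operatorname{Var}(X) = E[X^2] - (E[X])^2$ form underlying Definition~\ref{Defn-1.3}, observing that the leading term is the second moment $\E_{\C^2}(K_n)$ of Definition~\ref{Defn-1.2}. With $\theta(c_i) = 1$ this gives
$$\E_{\C^2}(K_n) = \frac{\sum_{i=1}^n i^2}{n} = \frac{(n+1)(2n+1)}{6},$$
via $\sum_{i=1}^n i^2 = \frac{n(n+1)(2n+1)}{6}$. Subtracting the square of the mean yields
$$\cV(K_n) = \frac{(n+1)(2n+1)}{6} - \left(\frac{n+1}{2}\right)^2,$$
and placing this over a common denominator of $12$ factors out $(n+1)$ and collapses to $\frac{(n+1)(n-1)}{12} = \frac{n^2-1}{12}$.

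There is no genuine obstacle once the colour multiplicities are fixed; the remainder is routine. The only point demanding care is the variance computation: I would be careful that the subtracted square is the square of the \emph{first} moment $\left(\frac{\sum_i i\,\theta(c_i)}{\sum_i \theta(c_i)}\right)^2$, so that the identity $\operatorname{Var} = E[X^2]-(E[X])^2$ is applied correctly, and then verify the algebraic simplification $2(2n+1) - 3(n+1) = n-1$ that produces the stated closed form.
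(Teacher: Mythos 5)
Your proposal is correct and follows essentially the same route as the paper: observe that mutual adjacency forces $\theta(c_i)=1$ for each of the $n$ colours, then apply the power-sum identities $\sum_{i=1}^n i = \frac{n(n+1)}{2}$ and $\sum_{i=1}^n i^2 = \frac{n(n+1)(2n+1)}{6}$ to get $\cE(K_n)=\frac{n+1}{2}$ and $\cV(K_n)=\frac{(n+1)(2n+1)}{6}-\left(\frac{n+1}{2}\right)^2=\frac{n^2-1}{12}$. Your added remark that every minimal proper colouring of $K_n$ yields the same colouring sum, so the minimality condition on $\omega_{\C}$ is automatic, is a small but welcome refinement the paper leaves implicit (and your careful subtraction of the square of the \emph{first} moment silently corrects a typographical slip in the paper's own displayed computation, which writes $\frac{(n+1)^2}{2}$ where $\frac{(n+1)^2}{4}$ is meant).
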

\begin{proof}
Note that all vertices of a complete graph $K_n$ must have different colours as they are all adjacent to each other. That is, $\theta(c_i)=1$ for colour $c_i, ~1\le i \le n$. Therefore, $\cE(K_n)=\frac{1}{n}\sum\limits_{i=1}^n\, i=\frac{n+1}{2}$ and $\cV(K_n)=\frac{1}{n}\sum\limits_{i=1}^n\,i^2 - \left(\frac{n+1}{2}\right)^2=\frac{(n+1)(2n+1)}{6}-\frac{(n+1)^2}{2}=\frac{n^2-1}{12}$.
\end{proof}

The following theorem gives the probability distribution of a proper colouring  of a complete graph.

\begin{thm}\label{Thm-1.1}
Any proper colouring of a complete graph $K_n$ has the discrete uniform distribution on \{1,2, \ldots,k\} (DU(k)).
\end{thm}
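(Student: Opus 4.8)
The plan is to exploit the defining feature of a complete graph: every pair of distinct vertices is adjacent. First I would observe that, in any proper colouring $\C$ of $K_n$, no two vertices may receive the same colour, since every two vertices are adjacent and a proper colouring forbids adjacent vertices from sharing a colour. Consequently each colour class is a singleton, so $\theta(c_i)=1$ for every colour $c_i$ that actually appears.

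Next I would pin down the number of colours appearing. Since the $n$ vertices must all be coloured distinctly, any proper colouring uses exactly $n$ colours; that is, $k=n$, and after the standard labelling the colour set is $\{1,2,\ldots,n\}$ with $\theta(c_i)=1$ for each $1\le i\le n$. This is the same count already used in the proof of Proposition~\ref{Prop-1.2}, so it is consistent with the earlier computation of $\cE(K_n)$ and $\cV(K_n)$.

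With these counts in hand, I would evaluate the probability mass function $f$ from the definition of the p.m.f. of $G$. For each $i\in\{1,2,\ldots,n\}$ we obtain
$$f(i)=\frac{\theta(c_i)}{|V(K_n)|}=\frac{1}{n},$$
a value independent of $i$. A probability mass function that is constant on $\{1,2,\ldots,n\}$ is, by definition, the p.m.f. of the discrete uniform distribution, so the random variable $X$ follows $DU(n)$, establishing the claim with $k=n$.

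I do not anticipate a genuine obstacle, as the entire argument reduces to the single observation that adjacency in $K_n$ forces every colour class to be a singleton. The only point requiring care is interpretational: the conclusion holds for \emph{every} proper colouring of $K_n$, not merely the minimal one yielding the $\chi$-chromatic parameters, because the singleton property of the colour classes follows from completeness alone and is independent of which specific $n$ colours are used or how they are assigned. Hence the same uniform p.m.f. arises for all proper colourings, which is exactly the content of the statement.
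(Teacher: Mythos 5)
Your proposal is correct and follows essentially the same route as the paper: both arguments rest on the single observation that adjacency of all vertex pairs in $K_n$ forces $\theta(c_i)=1$ for each colour and $k=n$, whence $f(i)=\frac{1}{n}$ is the p.m.f.\ of the discrete uniform distribution. Your write-up merely spells out the justification for $\theta(c_i)=1$ and $k=n$ more explicitly than the paper does.
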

\begin{proof}
Let $X$ be the \textit{r.v} representing the number of colours in a proper $k$-colouring of a complete graph $K_n$. For any proper $k$-colouring $\C$ of the complete graph $K_n$, $\theta(c_i)=1$ and $k=n$. Hence, the corresponding \textit{p.m.f} is
$$f(i)=
\begin{cases}
\frac{1}{n}; & n=1,2,3,\ldots, n,\\
0; & \text{elsewhere}
\end{cases}$$
which is that of the discrete uniform distribution on \{1,2, \ldots, k\}. Hence, $X \sim DU(k)$.
\end{proof} 

\ni The following result determines the $\chi$-chromatic mean and variance for a path $P_n$.

\begin{prop}\label{Prop-1.3}
The $\chi$-chromatic mean of a path $P_n$ is  
\begin{equation*}
\cE(P_n)=
\begin{cases}
\frac{3}{2}; &  \text{if $n$  is even},\\
\frac{3n-1}{2n};  &  \text{if $n$  is odd},
\end{cases}
\end{equation*}
and the $\chi$-chromatic variance of $P_n$ is
\begin{equation*}
\cV(P_n)=
\begin{cases}
\frac{1}{4}; &  \text{if $n$  is even},\\
\frac{n^2-1}{4n^2};  &  \text{if $n$  is odd}.
\end{cases}
\end{equation*}  
\end{prop}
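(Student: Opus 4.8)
The plan is to exploit the fact that a path $P_n$ is bipartite, so $\chi(P_n)=2$ and every minimal proper colouring uses exactly the two colours $c_1$ and $c_2$ assigned alternately along the vertices $v_1,v_2,\ldots,v_n$. The first step is to record the two colour-class sizes $\theta(c_1)$ and $\theta(c_2)$. Alternating colours along the path, one sees that when $n$ is even the classes are balanced, each of size $\tfrac{n}{2}$, whereas when $n$ is odd one class has size $\tfrac{n+1}{2}$ and the other $\tfrac{n-1}{2}$.

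The second step is to decide which colour receives the larger class, since $\cE(P_n)$ and $\cV(P_n)$ are taken with respect to the minimal colouring that \emph{also} minimises the colouring sum $\omega_{\C}(P_n)=\theta(c_1)+2\,\theta(c_2)$. For even $n$ the classes are equal, so the sum is forced to be $\tfrac{3n}{2}$ independently of the assignment. For odd $n$ the sum is minimised by giving the lighter-weighted colour $c_1$ to the larger class; hence I would set $\theta(c_1)=\tfrac{n+1}{2}$ and $\theta(c_2)=\tfrac{n-1}{2}$.

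With the $\theta$-values fixed, the remainder is a direct substitution into Definitions~\ref{Defn-1.1} and \ref{Defn-1.3}, using $\sum_i\theta(c_i)=n$. For the mean I would compute $\cE(P_n)=\frac{\theta(c_1)+2\theta(c_2)}{n}$ in each parity case, obtaining $\tfrac{3}{2}$ and $\tfrac{3n-1}{2n}$ respectively. For the variance I would first evaluate the second moment $\frac{\theta(c_1)+4\theta(c_2)}{n}$ and then subtract the square of the mean, which yields $\tfrac{1}{4}$ in the even case and, after clearing denominators over $4n^2$, the value $\tfrac{n^2-1}{4n^2}$ in the odd case.

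The arithmetic is elementary, so the only genuine decision point is the optimality argument in the odd case: one must justify that assigning $c_1$ to the majority class is precisely what delivers the minimum colouring sum required by the definition of the $\chi$-chromatic parameters. Everything else then follows by substitution.
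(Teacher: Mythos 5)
Your proposal is correct and follows essentially the same route as the paper: identify the alternating $2$-colouring of the bipartite path, record the class sizes $\tfrac{n}{2},\tfrac{n}{2}$ (even $n$) or $\tfrac{n+1}{2},\tfrac{n-1}{2}$ (odd $n$), and substitute into the definitions of the mean and variance. In fact you are slightly more careful than the paper at the one genuine decision point, since you explicitly argue that the minimum colouring sum forces colour $c_1$ onto the larger class in the odd case, whereas the paper simply writes down the resulting \emph{p.m.f.} $f(1)=\tfrac{n+1}{2n}$, $f(2)=\tfrac{n-1}{2n}$ without comment.
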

\begin{proof}
Consider a path $P_n$ on $n$ vertices. Being a bipartite graph, the vertices of $P_n$ can be coloured using two colours, say $c_1$ and $c_2$. Then, we have the following cases.

\begin{enumerate}
\item[(i)] If $n$ is even, exactly $\frac{n}{2}$ vertices of $P_n$ have colour $c_1$ and $\frac{n}{2}$ vertices have colour $c_2$. Then, the \textit{p.m.f} of the corresponding \textit{r.v} $X$ is 
$$f(i)=
\begin{cases}
\frac{1}{2}; & i=1,2, \\
0; & \text{elsewhere}.
\end{cases}$$

Hence, the $\chi$-chromatic mean is $\cE(P_n)=\sum\limits_{i=1}^{2}\, i\,\frac{1}{2}=\frac{3}{2}$ and the $\chi$-chromatic variance is $\cV(P_n)=\sum\limits_{i=1}^{2}\, i^2\,\frac{1}{2}-\left(\cE\right)^2=\frac{5}{2}-\left(\frac{3}{2}\right)^2=\frac{1}{4}$.

\item[(ii)] If $n$ is odd, then the \textit{p.m.f} of the corresponding \textit{r.v} $X$ is 
$$f(i)=
\begin{cases}
\frac{n+1}{2n}; & i=1, \\
\frac{n-1}{2n}; & i=2, \\
0; & \text{elsewhere}.
\end{cases}$$
Then, the $\chi$-chromatic mean of $P_n$ is $\cE(P_n)=1\cdot \frac{n+1}{2n}+2\cdot \frac{n-1}{2n}=\frac{3n-1}{2n}$ and its $\chi$-chromatic variance is $\cV(P_n)=1^2\cdot \frac{n+1}{2n}+2^2\cdot\frac{n-1}{2n}-\left(\frac{3n-1}{2n}\right)^2=\frac{n^2-1}{4n^2}$.
\end{enumerate}
\vspace{-0.85cm}
\end{proof}

\ni The following result determines the values of these parameters for a cycle $C_n$.

\begin{prop}\label{Prop-1.4}
The $\chi$-chromatic mean of a cycle $C_n$ is  
\begin{equation*}
\cE(C_n)=
\begin{cases}
\frac{3}{2}; &  \text{if $n$  is even},\\
\frac{3n+3}{2n};  &  \text{if $n$  is odd},
\end{cases}
\end{equation*}
and the $\chi$-chromatic variance of $C_n$ is
\begin{equation*}
\cV(C_n)=
\begin{cases}
\frac{1}{4}; &  \text{if $n$  is even},\\
\frac{n^2-8n+9}{4n^2};  &  \text{if $n$  is odd}.
\end{cases}
\end{equation*}  
\end{prop}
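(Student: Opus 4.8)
The plan is to split the argument by the parity of $n$, since $\chi(C_n)=2$ when $n$ is even and $\chi(C_n)=3$ when $n$ is odd, and the $\chi$-chromatic parameters are read off from a minimal proper colouring that also minimises the colouring sum $\omega_{\C}$.

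For even $n$ I would note that $C_n$ is connected and bipartite, so its bipartition is unique and each of the two colour classes is a maximum independent set of size $n/2$; thus $\theta(c_1)=\theta(c_2)=n/2$ is forced and the colouring sum is automatically minimal. The resulting p.m.f. is $f(1)=f(2)=\tfrac12$, exactly as in the even-path case of Proposition~\ref{Prop-1.3}, so the mean and variance computations are identical and give $\cE(C_n)=\tfrac32$ and $\cV(C_n)=\tfrac14$ with no further work.

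The substantive case is $n$ odd, where $\chi(C_n)=3$ and I must select, among all proper $3$-colourings, one minimising $\omega_{\C}(C_n)$. Writing the class sizes as $a=\theta(c_1)$, $b=\theta(c_2)$, $c=\theta(c_3)$ with $a+b+c=n$, the colouring sum is $\omega_{\C}(C_n)=a+2b+3c=2n-a+c$, so it is minimised by making $a$ as large and $c$ as small as the constraints allow. Since a largest independent set of an odd cycle has size $(n-1)/2$ we have $a\le(n-1)/2$, and since an odd cycle has no proper $2$-colouring we have $c\ge1$; both bounds are attained simultaneously by colouring $v_1,v_2,\dots,v_{n-1}$ alternately with colours $1$ and $2$ (possible because $n-1$ is even) and giving $v_n$ colour $3$. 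This produces the optimal distribution $\theta(c_1)=\theta(c_2)=(n-1)/2$ and $\theta(c_3)=1$, hence the p.m.f. $f(1)=f(2)=\tfrac{n-1}{2n}$, $f(3)=\tfrac1n$.

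Substituting this p.m.f. into Definitions~\ref{Defn-1.1} and~\ref{Defn-1.3} is the last step. The mean gives $\cE(C_n)=\tfrac{3(n-1)+6}{2n}=\tfrac{3n+3}{2n}$, agreeing with the statement. For the variance I expect $\E_{\C^2}(C_n)=\tfrac{5(n-1)+18}{2n}=\tfrac{5n+13}{2n}$ and $(\cE(C_n))^2=\tfrac{9(n+1)^2}{4n^2}$, whence $\cV(C_n)=\tfrac{5n+13}{2n}-\tfrac{9(n+1)^2}{4n^2}=\tfrac{n^2+8n-9}{4n^2}$. The one place I anticipate real difficulty is precisely this sign computation, because the expression $\tfrac{n^2-8n+9}{4n^2}$ printed in the statement is negative at $n=3$ and $n=5$ and so cannot be a variance. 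As a check I would test $n=3$, where $C_3=K_3$ and Proposition~\ref{Prop-1.2} gives $\cV(K_3)=\tfrac{n^2-1}{12}=\tfrac23$; this matches $\tfrac{n^2+8n-9}{4n^2}=\tfrac{24}{36}=\tfrac23$, confirming that the correct odd-case variance is $\tfrac{n^2+8n-9}{4n^2}$ and that the stated formula has a sign error.
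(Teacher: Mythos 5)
Your proof follows essentially the same route as the paper: split on the parity of $n$, use the balanced $2$-colouring when $n$ is even, and when $n$ is odd use the colouring with class sizes $\frac{n-1}{2},\frac{n-1}{2},1$, giving the p.m.f. $f(1)=f(2)=\frac{n-1}{2n}$, $f(3)=\frac{1}{n}$ and the mean $\frac{3n+3}{2n}$. You add one genuinely worthwhile step the paper skips: the paper simply asserts this p.m.f., whereas you prove it is optimal by writing $\omega_{\C}(C_n)=2n-a+c$ and invoking the bounds $a\le\frac{n-1}{2}$ (maximum independent set of an odd cycle) and $c\ge1$ (odd cycles are not $2$-colourable), both attained by the alternating colouring of $v_1,\dots,v_{n-1}$ with $c_3$ on $v_n$. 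Without this, the claim that the chosen colouring minimises the colouring sum is unjustified.

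Your sign diagnosis is also correct, and it exposes an arithmetic slip in the paper itself. The paper's own proof computes $\bigl(1^2\cdot\frac{n-1}{2n}+2^2\cdot\frac{n-1}{2n}+3^2\cdot\frac{1}{n}\bigr)-\bigl(\frac{3n+3}{2n}\bigr)^2$ and asserts this equals $\frac{n^2-8n+9}{4n^2}$, but the expression actually evaluates to $\frac{2n(5n+13)-9(n+1)^2}{4n^2}=\frac{n^2+8n-9}{4n^2}$, exactly as you found. The stated formula is negative for $n=3$ and $n=5$, which is impossible for a variance; your check at $n=3$ against $\cV(K_3)=\frac{2}{3}$ from Proposition~\ref{Prop-1.2} is decisive. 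Further corroboration inside the paper: the odd case of Proposition~\ref{Prop-1.6} uses the identical p.m.f. and there the paper does obtain $\frac{n^2+8n-9}{4n^2}$, as does the odd case of Proposition~\ref{Prop-2.4}. So the statement as printed contains a transposition error, and your corrected value $\frac{n^2+8n-9}{4n^2}$ is the right one.
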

\begin{proof}
Consider a cycle $C_n$ on $n$ vertices.  Then, we have the following cases.

\begin{enumerate}
\item[(i)] If $n$ is even, then $C_n$ is bipartite and is $2$-colourable. Then, exactly $\frac{n}{2}$ vertices of $C_n$ also have colour $c_1$ and $c_2$ each. Then, as explained in the first part of previous theorem,  we have $\E_{\chi}(C_n)=\frac{3}{2}$ and $\V_{\chi}(C_n)=\frac{1}{4}$.

\item[(ii)] If $n$ is odd, then $C_n$ is $3$-colourable. Let $\C=\{c_1,c_2,c_3\}$ be the minimal proper colouring of $C_n$. Then, the \textit{p.m.f} of the \textit{r.v} $X$ is given by
$$f(i)=
\begin{cases}
\frac{n-1}{2n}; & \text{if}~~ i=1,2,\\
\frac{1}{n}; & \text{if}~~ i=3,\\
0; & \text{elsewhere}.
\end{cases}$$
Then, the $\chi$-chromatic mean of $G$ is $\cE(C_n)=1\cdot \frac{n-1}{2n}+2\cdot \frac{n-1}{2n}+3\cdot \frac{1}{n}=\frac{3n+3}{2n}$ and the $\chi$-chromatic variance of $C_n$ is $\cV(C_n)=\left(1^2\cdot \frac{n-1}{2n}+2^2\cdot \frac{n-1}{2n}+3^2\cdot \frac{1}{n}\right)-\left(\frac{3n+3}{2n}\right)^2=\frac{n^2-8n+9}{4n^2}$.
\end{enumerate}
\vspace{-0.75cm}
\end{proof}

In the following theorem, we determine the $\chi$-chromatic mean and variance of a wheel graph $W_{n}=K_1+C_{n-1}$.

\begin{prop}\label{Prop-1.6}
The $\chi$-chromatic mean of a wheel graph $W_n$ is  
\begin{equation*}
\cE(W_n)=
\begin{cases}
\frac{3n+3}{2n}; &  \text{if $n$  is odd},\\
\frac{3n+1}{2n+2};  &  \text{if $n$  is even},
\end{cases}
\end{equation*}
and the $\chi$-chromatic variance of $W_n$ is
\begin{equation*}
\cV(W_n)=
\begin{cases}
\frac{n^2+8n-9}{4n^2}; &  \text{if $n$  is odd},\\
\frac{n^2+32n-64}{4n^2};  &  \text{if $n$  is even}.
\end{cases}
\end{equation*}
\end{prop}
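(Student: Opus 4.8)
The plan is to exploit the join structure $W_n=K_1+C_{n-1}$. The hub vertex (the $K_1$) is adjacent to all $n-1$ rim vertices, so in any proper colouring it must occupy a colour class by itself, disjoint from every colour used on the rim; hence $\chi(W_n)=1+\chi(C_{n-1})$. Since $C_{n-1}$ is an even cycle when $n$ is odd and an odd cycle when $n$ is even, this gives $\chi(W_n)=3$ for $n$ odd and $\chi(W_n)=4$ for $n$ even, so the proof splits along exactly the two stated parities. In both cases the order is $|V(W_n)|=n$, which is the denominator $\sum_i\theta(c_i)$ appearing in Definitions~\ref{Defn-1.1} and~\ref{Defn-1.3}.

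For $n$ odd I would colour the rim by the minimum-sum $2$-colouring of the even cycle $C_{n-1}$ furnished by the even case of Proposition~\ref{Prop-1.4}, producing two rim classes of size $\frac{n-1}{2}$ with colours $1$ and $2$, and then give the hub the remaining colour $3$. Thus $\theta(c_1)=\theta(c_2)=\frac{n-1}{2}$ and $\theta(c_3)=1$, and substituting these frequencies into $\cE(W_n)=\frac{1}{n}\sum_i i\,\theta(c_i)$ and $\cV(W_n)=\frac{1}{n}\sum_i i^2\theta(c_i)-\bigl(\cE(W_n)\bigr)^2$ should yield $\cE(W_n)=\frac{3n+3}{2n}$ and $\cV(W_n)=\frac{n^2+8n-9}{4n^2}$.

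For $n$ even the rim is the odd cycle $C_{n-1}$, whose minimum-sum proper colouring (the odd case of Proposition~\ref{Prop-1.4} applied to $C_{n-1}$) has class sizes $\frac{n-2}{2},\frac{n-2}{2},1$ for colours $1,2,3$; adjoining the hub as a fourth colour gives $\theta(c_1)=\theta(c_2)=\frac{n-2}{2}$ and $\theta(c_3)=\theta(c_4)=1$. The same substitution then produces the variance $\cV(W_n)=\frac{n^2+32n-64}{4n^2}$, while I would expect the mean to reduce to $\cE(W_n)=\frac{3n+8}{2n}$. As a sanity check on these frequencies, observe that $W_4=K_1+C_3=K_4$, so at $n=4$ the formulas must collapse to the complete-graph values of Proposition~\ref{Prop-1.2}, namely $\cE=\frac{5}{2}$ and $\cV=\frac{5}{4}$; both $\frac{n^2+32n-64}{4n^2}$ and $\frac{3n+8}{2n}$ pass this test.

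The one genuinely non-routine point is justifying that these colourings realise the \emph{minimum} colouring sum among all minimal proper colourings, as required by the definitions of the $\chi$-chromatic mean and variance. This rests on two observations. First, a rearrangement argument: $\sum_i i\,\theta(c_i)$ is minimised by assigning the smallest indices to the largest colour classes, which forces the singleton hub to carry the largest index and reduces the global optimisation to minimising the rim's own colouring sum. Second, the independence number of $C_{n-1}$ equals $\lfloor (n-1)/2\rfloor$, so no rim colour class can exceed $\frac{n-1}{2}$ (respectively $\frac{n-2}{2}$ when $n$ is even), which pins the rim class sizes down uniquely up to relabelling and hence fixes $\omega_{\C}(W_n)$. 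Once this optimality is in place, the remainder is the direct arithmetic substitution indicated above.
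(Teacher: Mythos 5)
Your proposal is correct and follows essentially the same route as the paper's own proof: identical colour class sizes ($\frac{n-1}{2},\frac{n-1}{2},1$ for $n$ odd; $\frac{n-2}{2},\frac{n-2}{2},1,1$ for $n$ even) and the same substitution into the mean and variance formulas. Two remarks are worth making. First, your computation of the even-case mean as $\frac{3n+8}{2n}$ agrees with the paper's own proof body, which also derives $\cE(W_n)=\frac{3n+8}{2n}$; the value $\frac{3n+1}{2n+2}$ printed in the proposition statement is a typo, as your $W_4=K_4$ sanity check confirms ($\frac{3n+1}{2n+2}$ gives $\frac{13}{10}\neq\frac{5}{2}$, while $\frac{3n+8}{2n}$ gives $\frac{5}{2}$, and the stated variance $\frac{n^2+32n-64}{4n^2}$ is only consistent with the mean $\frac{3n+8}{2n}$). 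Second, your justification that these colourings realise the minimum colouring sum is something the paper omits entirely (it simply asserts the colourings), so this is a genuine strengthening; note only that your claim that the independence-number bound alone ``pins the rim class sizes down uniquely'' is accurate for the even rim cycle but slightly overstated for the odd rim cycle, where e.g.\ sizes $\frac{n-2}{2},\frac{n-4}{2},2$ also respect the bound $\alpha(C_{n-1})=\frac{n-2}{2}$ --- there you additionally need the rearrangement step you already invoked, i.e.\ minimising $a+2b+3c$ subject to $a+b+c=n-1$ and $a,b,c\le\frac{n-2}{2}$, which forces $(a,b,c)=\left(\frac{n-2}{2},\frac{n-2}{2},1\right)$.
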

\begin{proof}
Note that the wheel graph $W_n$ is $3$-colourable, when $n$ is odd and $4$-colourable when is even. Then, we have the following cases.

\begin{enumerate}
\item[(i)] First, assume that $n$ is an odd integer. Then, the outer cycle $C_{n-1}$ of $W_n$ is an even cycle. Hence, $\frac{n-1}{2}$ vertices of $C_{n-1}$ have colour $c_1$, $\frac{n-1}{2}$ vertices of $C_{n-1}$  have colour $c_2$ and the central vertex of $W_n$  has colour $c_3$. Hence the corresponding \textit{p.m.f} for $W_n$ is given by
$$f(i)=
\begin{cases}
\frac{n-1}{2n}; & \text{if}~~~ i=1,2,\\
\frac{1}{n}; &  \text{if}~~~ i=3,\\
0; & \text{elsewhere}.
\end{cases}$$

Hence, the corresponding $\chi$-chromatic mean is $\cE(W_n)=1\cdot \frac{n-1}{2n}+2\cdot \frac{n-1}{2n}+3\cdot \frac{1}{n}=\frac{3n+3}{2n}$. Now, the $\chi$-chromatic variance is $\cV(W_n)= (1^2+2^2)\cdot \frac{n-1}{2n}+3^2\cdot \frac{1}{n}-\left(\cE(W_n)\right)^2=\left(\frac{5(n-1)}{2n}+\frac{9}{n}\right)-\left(\frac{3n+3}{2n}\right)^2=\frac{n^2+8n-9}{4n^2}$.

\item[(ii)] Next, assume that $n$ is an even integer. Then, the outer cycle $C_{n-1}$ of $W_n$ is an odd cycle. Hence, $\frac{n-2}{2}$ vertices of the outer cycle $C_{n-1}$ have colour $c_1$, $\frac{n-2}{2}$ vertices of $C_{n-1}$ have colour $c_2$ and one vertex of $C_{n-1}$ has colour $c_3$ and the central vertex of $W_n$ has the $c_4$. Hence, the \textit{p.m.f} for $W_n$ is given by
$$f(i)=
\begin{cases}
\frac{n-2}{2n}; & \text{if}~~~ i=1,2,\\
\frac{1}{n}; &  \text{if}~~~ i=3,4\\
0; & \text{elsewhere}.
\end{cases}$$

Hence, the corresponding $\chi$-chromatic mean is $\cE(W_n)=1\cdot \frac{n-2}{2n}+2\cdot \frac{n-2}{2n}+3\cdot \frac{1}{n}+4\cdot \frac{1}{n}=\frac{3n+8}{2n}$ and the $\chi$-chromatic variance is $\cV(W_n)= (1^2+2^2)\cdot \frac{n-2}{2n}+(3^2+4^2)\cdot \frac{1}{n}-\left(\cE(W_n)\right)^2=\left(\frac{5(n-2)}{2n}+\frac{3^2+4^2}{n}\right)-\left(\frac{3n+8}{2n}\right)^2=\frac{n^2+32n-64}{4n^2}$.
\end{enumerate}
\vspace{-0.95cm}
\end{proof}

\begin{rem}{\rm 
From the above discussion, we observe that the minimum proper colouring of bipartite graph follows a two-point distribution. In general, for a bipartite graph $G(V_1,V_2,E)$, with $|V_1|=m_1>|V_2|=m_2, m_1+m_2=n$, 
the \textit{p.m.f} can be defined as 
$$f(i)=
\begin{cases}
\frac{m_1}{n}; & \text{if}~~~ i=1,\\
\frac{m_2}{n}; & \text{if}~~~ i=2,\\
0; & \text{elsewhere}.
\end{cases}$$

Hence, we have $\cE(G)=\frac{m_1+2m_2}{n}=1+\frac{m_2}{n}$ and $\cV(G)=\frac{m_1+4m_2}{n}-\left(1+\frac{m_2}{n}\right)^2=\frac{1}{n^2}\left[(n-1)m_1+2(2n-1)m_2\right]$.}
\end{rem}

\begin{rem}{\rm 
If $G$ is a regular bipartite graph on $n$ vertices, then there will be $\frac{n}{2}$ vertices in each partition and hence with respect to a minimal proper colouring, exactly $\frac{n}{2}$ vertices having the colours $c_1$ and $c_2$ each. Hence the \textit{p.m.f} is 

$$f(i)=
\begin{cases}
\frac{1}{2}; & i=1,2,\\
0; & \text{elsewhere}.
\end{cases}$$
Hence, $\cE(G)=\frac{3}{2}$ and $\cV(G)=\frac{1}{4}$ as mentioned in Proposition \ref{Prop-1.4}.}
\end{rem}

\subsection{$\chi^+$-Chromatic Mean and Variance of Graphs}

Colouring mean and variance corresponding to another type of a minimal proper colouring of the vertices of $G$ are defined as follows.

\begin{defn}{\rm 
A colouring mean of a graph $G$, with respect to a proper colouring $\C$ is said to be a \textit{$\chi^+$-chromatic mean} of $G$, if $\C$ is a minimum proper colouring of $G$ such that the corresponding colouring sum $\omega_G$ is maximum. The $\chi^+$-chromatic number of a graph $G$ is denoted by $\dE(G)$. }
\end{defn}

\begin{defn}{\rm 
The \textit{$\chi^+$-chromatic variance} of $G$, denoted by $\cV(G)$, is a colouring variance of $G$ with respect to a minimal proper colouring $\C$ of $G$ such that the corresponding colouring sum is maximum.  }
\end{defn}

Invoking the definitions of two types of chromatic means and variances mentioned above, we can infer the following. 

\begin{rem}{\rm 
For any arbitrary minimal proper colouring $\C$ of a given graph $G$, we have $\cE(G) \le \E_{\C}(G) \le \dE(G)$ and $\cV(G) \le \V_{\C}(G) \le \dV(G)$.}
\end{rem}

\begin{rem}{\rm 
Since all vertices of a complete graph have different colours, the $\chi$-chromatic mean and the $\chi^+$-chromatic mean are equal and the $\chi$-chromatic variance and the $\chi^+$-chromatic variance are equal.}
\end{rem}

Let us now discuss the $\chi^+$-chromatic mean and variance of the graph classes mentioned in the previous section.


\begin{prop}\label{Prop-2.3}
The $\chi^+$-chromatic mean of a path $P_n$ is  
\begin{equation*}
\dE(P_n)=
\begin{cases}
\frac{3}{2}; &  \text{if $n$  is even},\\
\frac{3n-1}{2n};  &  \text{if $n$  is odd},
\end{cases}
\end{equation*}
and the $\chi^+$-chromatic variance of $P_n$ is
\begin{equation*}
\dV(P_n)=
\begin{cases}
\frac{1}{4}; &  \text{if $n$  is even},\\
\frac{n^2-1}{4n^2};  &  \text{if $n$  is odd}.
\end{cases}
\end{equation*}  
\end{prop}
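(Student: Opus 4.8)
The plan is to use that $P_n$ is bipartite with $\chi(P_n)=2$, so every minimal proper colouring uses precisely the two colours $c_1,c_2$, and the underlying bipartition of $V(P_n)$ is forced: since $P_n$ is connected, its proper $2$-colouring is unique up to interchanging the two colours. Hence the only freedom in a minimal proper colouring is the choice of which part of the bipartition receives $c_1$ and which receives $c_2$, and the whole problem reduces to selecting the labelling that maximises the colouring sum $\omega_G=1\cdot\theta(c_1)+2\cdot\theta(c_2)$, reading off the resulting probability mass function, and substituting it into Definitions \ref{Defn-1.1} and \ref{Defn-1.3}.

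I would dispose of the even case first, where matters are cleanest. When $n$ is even both parts of the bipartition have cardinality $\frac{n}{2}$, so $\theta(c_1)=\theta(c_2)=\frac{n}{2}$ for every minimal proper colouring; hence $\omega_G=\frac{3n}{2}$ is constant, the sum-maximising colouring coincides with the sum-minimising one, and the $\chi^+$-p.m.f. is identical to the $\chi$-p.m.f.\ of Proposition \ref{Prop-1.3}(i). Therefore $\dE(P_n)=\frac{3}{2}$ and $\dV(P_n)=\frac{1}{4}$, matching both the stated values and the corresponding $\chi$-parameters exactly.

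For the odd case the two parts have cardinalities $\frac{n+1}{2}$ and $\frac{n-1}{2}$, so the p.m.f.\ is again supported on $\{1,2\}$ with weights $\frac{n+1}{2n}$ and $\frac{n-1}{2n}$ in some order. Here I would first record a symmetry that settles the variance at once: interchanging $c_1$ and $c_2$ acts on the random variable $X$ by the reflection $X\mapsto 3-X$, under which the variance is invariant; consequently the $\chi^+$-variance must equal the $\chi$-variance of Proposition \ref{Prop-1.3}(ii), giving $\dV(P_n)=\frac{n^2-1}{4n^2}$ with no further calculation. The mean would then be obtained by substituting the $\chi^+$-p.m.f.\ into Definition \ref{Defn-1.1} to reach the stated value $\dE(P_n)=\frac{3n-1}{2n}$.

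The step I expect to be the main obstacle is the precise determination of the $\chi^+$-p.m.f.\ in the odd case, that is, deciding which of the two colour-values $1,2$ carries the weight $\frac{n+1}{2n}$. This is exactly the point at which the maximisation of $\omega_G$ must be confronted with the ordering $\cE(P_n)\le\dE(P_n)$ recorded in the Remark, and it is the crux on which the stated mean $\frac{3n-1}{2n}$ rests; once this p.m.f.\ is pinned down, both the mean and (redundantly) the variance follow by direct substitution into the definitions.
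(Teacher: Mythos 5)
Your even case and your symmetry argument for the variance are both sound: interchanging the two colours acts on $X$ as $X\mapsto 3-X$, so $\dV(P_n)=\cV(P_n)=\frac{n^2-1}{4n^2}$ for odd $n$, which is correct --- and in fact cleaner than the paper's direct computation, whose own display evaluates to $\frac{n^2-1}{4n^2}$ but is misprinted there as $\frac{n^2+1}{4n^2}$.

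The genuine gap is exactly at the step you yourself flagged as the crux, and it cannot be closed in the direction you propose. For odd $n$ the bipartition classes have sizes $\frac{n+1}{2}$ and $\frac{n-1}{2}$, and the colouring sum $\omega=1\cdot\theta(c_1)+2\cdot\theta(c_2)$ is maximised precisely by giving colour $c_2$ to the larger class, i.e.\ $f(1)=\frac{n-1}{2n}$ and $f(2)=\frac{n+1}{2n}$; this is the \emph{only} sum-maximising p.m.f. Substituting it into Definition \ref{Defn-1.1} yields $\dE(P_n)=\frac{3n+1}{2n}$, not the stated $\frac{3n-1}{2n}$: the latter value arises only from the sum-\emph{minimising} colouring and is exactly $\cE(P_n)$ of Proposition \ref{Prop-1.3}. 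Since the two candidate means differ, the ordering $\cE(P_n)\le\dE(P_n)$ you invoke forces $\dE(P_n)=\frac{3n+1}{2n}$ strictly, so your plan, carried out correctly, contradicts the proposition as printed. Indeed the paper's own proof assigns the weights exactly as above and computes $\frac{3n+1}{2n}$; the displayed statement merely repeats the $\chi$-chromatic values of Proposition \ref{Prop-1.3}, evidently a copying error. The honest conclusion of your argument is therefore $\dE(P_n)=\frac{3n+1}{2n}$ for odd $n$ (with $\dV(P_n)=\frac{n^2-1}{4n^2}$ as you obtained); you should correct the statement rather than bend the final substitution to ``reach the stated value.''
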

\begin{proof}
\ni As in Proposition \ref{Prop-1.3}, we consider the following cases.

\begin{enumerate}
\item[(i)] If $n$ is even, as mentioned in Proposition \ref{Prop-1.3}, exactly $\frac{n}{2}$ vertices of $P_n$ have colour $c_1$ and $\frac{n}{2}$ vertices have colour $c_2$. Then, the \textit{p.m.f} of the corresponding \textit{r.v} $X$ is also as defined there. 
Hence, the $\chi^+$-chromatic mean is $\cE(P_n)=\frac{3}{2}$ and the $\chi$-chromatic variance is $\cV(P_n)=\frac{1}{4}$.

\item[(ii)] If $n$ is odd, $\chi^+$-colouring assigns colour $c_1$ to $\frac{n-1}{2n}$ vertices and colour $c_2$ to the remaining $\frac{n+1}{2n}$ vertices. Then the \textit{p.m.f} is 
$$f(i)=
\begin{cases}
\frac{n-1}{2n}; & i=1, \\
\frac{n+1}{2n}; & i=2, \\
0; & \text{elsewhere}.
\end{cases}$$
Then, the $\chi^+$-chromatic mean of $P_n$ is given by $\cE(P_n)=1\cdot \frac{n-1}{2n}+2\cdot \frac{n+1}{2n}=\frac{3n+1}{2n}$ and its $\chi^+$-chromatic variance is given by $\cV(P_n)=1^2\cdot \frac{n-1}{2n}+2^2\cdot\frac{n+1}{2n}-\left(\frac{3n+1}{2n}\right)^2=\frac{n^2+1}{4n^2}$.
\end{enumerate}
\vspace{-0.75cm}
\end{proof}

The following proposition discusses the $\chi^+$-chromatic mean and variance of a cycle on $n$ vertices.

\begin{prop}\label{Prop-2.4}
The $\chi^+$-chromatic mean of a cycle $C_n$ is  
\begin{equation*}
\dE(C_n)=
\begin{cases}
\frac{3}{2}; &  \text{if $n$  is even},\\
\frac{5n-3}{2n};  &  \text{if $n$  is odd},
\end{cases}
\end{equation*}
and the $\chi^+$-chromatic variance of $P_n$ is
\begin{equation*}
\dV(C_n)=
\begin{cases}
\frac{1}{4}; &  \text{if $n$  is even},\\
\frac{n^2+8n-9}{4n^2};  &  \text{if $n$  is odd}.
\end{cases}
\end{equation*}  
\end{prop}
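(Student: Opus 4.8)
The plan is to split on the parity of $n$, mirroring Proposition \ref{Prop-1.4}, and throughout to exploit that the $\chi^+$-colouring is the minimal proper colouring realising the \emph{maximum} colouring sum $\omega=\sum_{i}i\,\theta(c_i)$. For even $n$, the cycle $C_n$ is connected, bipartite and $2$-regular, so $\chi(C_n)=2$ and the (unique) bipartition forces exactly $\frac{n}{2}$ vertices into each of the two colour classes. Since both class sizes are completely determined, the colouring sum is constant over all minimal proper colourings, so the maximising and minimising colourings coincide; I would simply record $\dE(C_n)=\cE(C_n)=\frac{3}{2}$ and $\dV(C_n)=\cV(C_n)=\frac{1}{4}$, inherited from Proposition \ref{Prop-1.4}.

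For odd $n$ we have $\chi(C_n)=3$, and I would first rewrite the sum for an arbitrary proper $3$-colouring, using $\theta(c_1)+\theta(c_2)+\theta(c_3)=n$, as
$$\omega=\theta(c_1)+2\,\theta(c_2)+3\,\theta(c_3)=n+\theta(c_2)+2\,\theta(c_3),$$
so that maximising $\omega$ is equivalent to maximising $\theta(c_2)+2\,\theta(c_3)$. The key structural input is the independence number $\alpha(C_n)=\frac{n-1}{2}$ of an odd cycle: each colour class is an independent set, hence $\theta(c_2)\le\frac{n-1}{2}$ and $\theta(c_3)\le\frac{n-1}{2}$ simultaneously, which yields
$$\theta(c_2)+2\,\theta(c_3)\le\tfrac{n-1}{2}+2\cdot\tfrac{n-1}{2}=\tfrac{3(n-1)}{2}.$$

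To show the bound is attained I would exhibit an explicit optimal colouring: assign $c_3$ to the $\frac{n-1}{2}$ vertices $v_1,v_3,\dots,v_{n-2}$, assign $c_2$ to the $\frac{n-1}{2}$ vertices $v_2,v_4,\dots,v_{n-1}$, and assign $c_1$ to the single vertex $v_n$. A direct check of the $n$ cycle-edges confirms properness, and this colouring gives $\theta(c_1)=1$ and $\theta(c_2)=\theta(c_3)=\frac{n-1}{2}$, meeting the upper bound. Consequently the $\chi^+$-colouring has p.m.f. $f(1)=\frac{1}{n}$ and $f(2)=f(3)=\frac{n-1}{2n}$.

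The remaining computation is then routine: $\dE(C_n)=1\cdot\frac{1}{n}+2\cdot\frac{n-1}{2n}+3\cdot\frac{n-1}{2n}=\frac{5n-3}{2n}$, and
$$\dV(C_n)=\left(1\cdot\tfrac{1}{n}+4\cdot\tfrac{n-1}{2n}+9\cdot\tfrac{n-1}{2n}\right)-\left(\tfrac{5n-3}{2n}\right)^2=\frac{n^2+8n-9}{4n^2}.$$
The only genuine obstacle is the optimality argument in the odd case, namely justifying that no proper $3$-colouring can exceed this sum. This is settled cleanly by the \emph{simultaneous} cap $\theta(c_2),\theta(c_3)\le\alpha(C_n)=\frac{n-1}{2}$, which bounds $\theta(c_2)+2\,\theta(c_3)$ with no case analysis, together with the explicit colouring above demonstrating tightness; everything else reduces to the fixed bipartite count (even case) and elementary arithmetic.
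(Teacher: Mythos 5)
Your proof is correct, and its skeleton is the same as the paper's: split on parity, inherit the even case from the bipartite count, and in the odd case use class sizes $\theta(c_1)=1$, $\theta(c_2)=\theta(c_3)=\frac{n-1}{2}$ to compute $\dE(C_n)=\frac{5n-3}{2n}$ and $\dV(C_n)=\frac{n^2+8n-9}{4n^2}$. The genuine difference is that the paper simply \emph{asserts} these class sizes for the $\chi^+$-colouring, whereas you prove they are optimal: rewriting $\omega=n+\theta(c_2)+2\,\theta(c_3)$ and invoking the simultaneous cap $\theta(c_2),\theta(c_3)\le\alpha(C_n)=\frac{n-1}{2}$, then exhibiting an explicit proper $3$-colouring attaining the bound. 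This closes a real gap in the published argument (which also contains typographical slips in the odd case, e.g.\ $f(1)=1$ where $f(1)=\frac{1}{n}$ is meant, and stray factors $\frac{1}{2n}$ and $\frac{n+1}{2n}$ in the mean and variance computations, though the stated results are right). Your independence-number argument costs only a line and makes the ``maximum colouring sum'' claim rigorous; the paper's route is shorter but takes the extremal configuration on faith. One small point worth making explicit in your write-up: since the colouring is a minimum proper colouring, all three classes are nonempty, which your bound is consistent with ($\theta(c_1)=n-\theta(c_2)-\theta(c_3)\ge n-(n-1)=1$); and in the even case, connectedness gives uniqueness of the bipartition, which is exactly why the minimising and maximising colourings coincide there.
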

\begin{proof}
\ni Here, we have to consider the following two cases.

\begin{enumerate}
\item[(i)] If $n$ is even, as mentioned in Proposition \ref{Prop-2.3}, exactly $\frac{n}{2}$ vertices of $C_n$ have colour $c_1$ and colour $c_2$ each. Then, exactly as explained there, we have, $\cE(C_n)=\frac{3}{2}$ and $\cV(C_n)=\frac{1}{4}$.

\item[(ii)] If $n$ is odd, $\chi^+$-colouring assigns colour $c_1$ to one vertex, colour $c_2$ to $\frac{n-1}{2n}$ vertices and colour $c_3$ to the remaining $\frac{n-1}{2n}$ vertices of the cycle $C_n$. Then the \textit{p.m.f} is 
$$f(i)=
\begin{cases}
1; & i=1,\\
\frac{n-1}{2n}; & i=2,3 \\
0; & \text{elsewhere}.
\end{cases}$$
Then, the $\chi^+$-chromatic mean of $C_n$ is $\cE(C_n)=1\cdot \frac{1}{2n}+2\cdot \frac{n-1}{2n}+3\cdot \frac{n-1}{2n}=\frac{5n-3}{2n}$ and its $\chi^+$-chromatic variance is $\cV(C_n)=1^2\cdot \frac{1}{n}+2^2\cdot\frac{n+1}{2n}+3^2\cdot \frac{n-1}{2n}-\left(\frac{5n-3}{2n}\right)^2=\frac{n^2+8n-9}{4n^2}$.
\end{enumerate}
\vspace{-0.65cm}
\end{proof}

The following proposition discusses the $\chi^+$-chromatic mean and variance of a wheel graph on $n$ vertices.

\begin{prop}\label{Prop-2.5}
The $\chi^+$-chromatic mean of a wheel graph $W_n$ is  
\begin{equation*}
\dE(W_n)=
\begin{cases}
\frac{5n-3}{2n}; &  \text{if $n$  is odd},\\
\frac{3n+1}{2n+2};  &  \text{if $n$  is even},
\end{cases}
\end{equation*}
and the $\chi^+$-chromatic variance of $W_n$ is
\begin{equation*}
\dV(W_n)=
\begin{cases}
\frac{n^2+30n-31}{4n^2}; &  \text{if $n$  is odd},\\
\frac{n^2+32n-64}{4n^2};  &  \text{if $n$  is even}.
\end{cases}
\end{equation*}
\end{prop}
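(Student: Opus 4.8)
\noindent The plan is to split on the parity of $n$, since $\chi(W_n)=3$ when $n$ is odd (the rim $C_{n-1}$ is then an even, hence bipartite, cycle) and $\chi(W_n)=4$ when $n$ is even (the rim is an odd cycle). In either case, once the colour-class sizes $\theta(c_i)$ of a sum-maximising minimal colouring are pinned down, the values $\dE(W_n)$ and $\dV(W_n)$ drop out by substituting the resulting p.m.f.\ into Definitions \ref{Defn-1.1} and \ref{Defn-1.3}, exactly as in Proposition \ref{Prop-2.4}. So the real content is to identify, among all proper $\chi(W_n)$-colourings, the one whose colouring sum $\omega_G=\sum_i i\,\theta(c_i)$ is largest.

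For $n$ odd I would argue as follows. The hub is adjacent to every rim vertex, so it alone carries one colour and the rim is restricted to the remaining two; being an even cycle the rim splits into two equal classes of $\frac{n-1}{2}$ vertices. To make $\omega_G$ as large as possible the singleton hub should take the cheapest colour $c_1$ and the two rim classes should take $c_2$ and $c_3$; comparing this with the only two competitors (hub at $c_2$ or at $c_3$) confirms it is strictly best for $n>3$. This gives the p.m.f.\ $f(1)=\tfrac1n,\ f(2)=f(3)=\tfrac{n-1}{2n}$, from which $\dE(W_n)$ and $\dV(W_n)$ follow by substitution.

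For $n$ even the rim is an odd cycle and needs three colours, so all four colours are present. The structural fact I would use is that an odd cycle admits a proper $3$-colouring in which its cheapest available colour is used exactly once while the remaining even path alternates between the two costliest colours in balanced halves. Sending the hub to $c_1$ and the lone ``defect'' rim vertex to $c_2$ then yields $\theta(c_1)=\theta(c_2)=1$ and $\theta(c_3)=\theta(c_4)=\frac{n-2}{2}$; a short check over the possible rôles of the hub shows no other minimal colouring beats this sum. The mean and variance again follow by substitution.

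A conceptually cleaner route, which I would use to cross-check the arithmetic, exploits a reflection symmetry: reversing the colour order via $i\mapsto(\chi+1)-i$ carries any minimal proper colouring into another minimal proper colouring and sends $\omega_G$ to $(\chi+1)\,|V(W_n)|-\omega_G$. Hence a colouring is sum-maximal precisely when its reversal is sum-minimal, so the $\chi^+$-distribution of $W_n$ is exactly the reversal of the $\chi$-distribution found in Proposition \ref{Prop-1.6}. This gives at once $\dE(W_n)=(\chi+1)-\cE(W_n)$ and $\dV(W_n)=\cV(W_n)$, explaining in particular why the $\chi$- and $\chi^+$-variances coincide here. The only genuine obstacle is the optimality claim itself---that the exhibited class sizes truly maximise $\omega_G$ over all minimal colourings---and the reflection principle disposes of it cleanly by reducing maximisation of the sum to the already-settled minimisation in Proposition \ref{Prop-1.6}.
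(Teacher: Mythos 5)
Your constructive core coincides with the paper's own proof: for odd $n$ you put the hub in class $c_1$ and split the even rim into two classes of $\frac{n-1}{2}$ at $c_2,c_3$; for even $n$ you put the hub at $c_1$, one rim vertex at $c_2$, and two halves of $\frac{n-2}{2}$ at $c_3,c_4$ --- exactly the p.m.f.s the paper substitutes into Definitions \ref{Defn-1.1} and \ref{Defn-1.3}. What you add is genuinely different and valuable: the paper never argues that these colourings actually maximise $\omega_{\C}(G)$ (it merely exhibits them and computes), whereas your reflection map $i\mapsto(\chi+1)-i$ is a correct bijection between minimal proper colourings sending $\omega_{\C}$ to $(\chi+1)\,n-\omega_{\C}$, so it reduces sum-maximisation to the sum-minimisation already treated in Proposition \ref{Prop-1.6} and yields the identities $\dE(W_n)=(\chi+1)-\cE(W_n)$ and $\dV(W_n)=\cV(W_n)$. (For even $n$ your direct ``check the r\^{o}les of the hub'' is a little thin --- one must also exclude unbalanced rim class sizes such as $\bigl\{\frac{n-2}{2},\frac{n-4}{2},2\bigr\}$ --- but the reflection principle covers this, to the same standard of rigour as Proposition \ref{Prop-1.6} itself.)

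There is, however, a discrepancy you assert past without noticing: your (correct) argument does not yield the proposition as stated, in two of its four entries. For odd $n$, substitution into your p.m.f.\ gives $\dV(W_n)=\frac{13n-11}{2n}-\left(\frac{5n-3}{2n}\right)^2=\frac{n^2+8n-9}{4n^2}$, and your identity $\dV(W_n)=\cV(W_n)$ says the same via Proposition \ref{Prop-1.6}; this contradicts the stated value $\frac{n^2+30n-31}{4n^2}$ (test $n=5$: the colour multiset $\{1,2,2,3,3\}$ has variance $\frac{14}{25}=\frac{n^2+8n-9}{4n^2}$, not $\frac{144}{100}$). Your closing remark that the reflection ``explains why the $\chi$- and $\chi^+$-variances coincide here'' is sound mathematics, but it flatly conflicts with the stated odd-case values, and a careful write-up must flag this as an erratum rather than leave the clash implicit. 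Similarly, for even $n$ your construction gives $\dE(W_n)=\frac{7n-8}{2n}$ --- which is also what the paper's own proof computes before its statement asserts $\frac{3n+1}{2n+2}$, a value below $2$ and hence impossible when $n-2$ of the $n$ vertices carry colours $3$ and $4$ (the same impossible expression appears in the even case of Proposition \ref{Prop-1.6}, consistent with a copying error there). So: your method is correct, strictly stronger than the paper's on the optimality question, and in fact exposes two errata in the statement; but as a proof of the literal proposition it necessarily fails on those two entries, and you should have said so explicitly.
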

\begin{proof}
As mentioned in Proposition \ref{Prop-1.6}, the wheel graph $W_n$ is $3$-colorable, when $n$ is odd and $4$-colourable when is even. Then, we have to consider the following cases.

\begin{enumerate}
\item[(i)] First, assume that $n$ is an odd integer. Then, the outer cycle $C_{n-1}$ of $W_n$ is an even cycle. Hence, we can assign colour $c_1$ to the central vertex of $W_n$, colour $c_2$ to $\frac{n-1}{2}$ vertices of $C_{n-1}$ and colour $c_3$ to the remaining $\frac{n-1}{2}$ vertices of $C_{n-1}$. Hence the corresponding \textit{p.m.f} for $W_n$ is given by
$$f(i)=
\begin{cases}
\frac{1}{n}; &  \text{if}~~~ i=1,\\
\frac{n-1}{2n}; & \text{if}~~~ i=2,3,\\
0; & \text{elsewhere}.
\end{cases}$$

Hence, the $\chi$-chromatic mean is $\cE(W_n)=1\cdot \frac{1}{n}+2\cdot \frac{n-1}{2n}+3\cdot \frac{n-1}{2n}=\frac{5n-3}{2n}$ and the $\chi$-chromatic variance is $\cV(W_n)= 1^2\cdot \frac{1}{n}+(2^2+3^2)\cdot \frac{n-1}{2n}-\left(\cE(W_n)\right)^2=\left(\frac{13(n-1)}{2n}+\frac{1}{n}\right)-\left(\frac{5n-3}{2n}\right)^2=\frac{n^2+30n-31}{4n^2}$.

\item[(ii)] Let $n$ be an even integer. Then, the outer cycle $C_{n-1}$ of $W_n$ is an odd cycle. Hence, we can assign colour $c_1$ to the central vertex of $W_n$, colour  $c_2$ to one vertex of the outer cycle $C_{n-1}$, colour $c_3$ to $\frac{n-2}{2}$ vertices of $C_{n-1}$ and colour $c_4$ to the remaining  $\frac{n-2}{2}$ vertices of $C_{n-1}$. Therefore, the corresponding \textit{p.m.f} for $W_n$ is given by
$$f(i)=
\begin{cases}
\frac{1}{n}; &  \text{if}~~~ i=1,2\\
\frac{n-2}{2n}; & \text{if}~~~ i=3,4,\\
0; & \text{elsewhere}.
\end{cases}$$

Hence, the corresponding $\chi$-chromatic mean is $\cE(W_n)=1\cdot \frac{1}{n}+2\cdot \frac{1}{n}+3\cdot \frac{n-2}{2n}+4\cdot \frac{n-2}{2n}=\frac{7n-8}{2n}$ and the $\chi$-chromatic variance is $\cV(W_n)= (1^2+2^2)\cdot \frac{1}{n}+(3^2+4^2)\cdot \frac{n-2}{2n}-\left(\cE(W_n)\right)^2=\left(5\cdot\frac{1}{n}+25\cdot\frac{n-2}{2n}\right)-\left(\frac{7n-8}{2n}\right)^2=\frac{n^2+32n-64}{4n^2}$.
\end{enumerate}
\vspace{-0.95cm}
\end{proof}

\subsection{Some Interpretations}

A \textit{block graph} or \textit{clique tree} $G$ is an undirected graph in which every biconnected component (block) is a clique. By Theorem \ref{Thm-1.1}, minimum proper colouring of every component of $G$ follows uniform distribution. Hence, we have 

\begin{thm}
The probability distribution of a block graph $G$ is mixture of discrete uniform distributions. 
\end{thm}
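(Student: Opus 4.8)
The plan is to combine the block decomposition of $G$ with Theorem \ref{Thm-1.1}. Write $G$ as the union of its blocks $B_1, B_2, \ldots, B_p$, where by hypothesis each $B_j$ is a clique $K_{m_j}$. In any proper colouring of $G$ the vertices of a fixed block $B_j$ are pairwise adjacent, so they receive pairwise distinct colours; the colouring thus restricts to a proper colouring of $K_{m_j}$, and by Theorem \ref{Thm-1.1} the colour frequencies inside $B_j$ realise a discrete uniform law. Each block therefore contributes a uniform component, and the whole task reduces to assembling these components into the global p.m.f. $f(i)=\theta(c_i)/|V(G)|$ of $G$.

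First I would fix the components and weights. The blocks cover $V(G)$ and overlap only at cut vertices, since every edge of $G$ lies in a unique block; orienting the block--cut tree I would assign each vertex to exactly one incident block, producing a partition $V(G)=V_1\sqcup V_2\sqcup\cdots\sqcup V_p$ with $V_j\subseteq V(B_j)$. Because $V_j$ lies inside the clique $B_j$, its vertices are all distinctly coloured, so its colour-frequency distribution $g_j$ assigns equal probability $1/|V_j|$ to each colour it uses and $0$ elsewhere; that is, $g_j$ is a discrete uniform distribution in the sense of Theorem \ref{Thm-1.1}. Setting $w_j=|V_j|/|V(G)|$, the partition immediately gives
$$
f(i)=\frac{\theta(c_i)}{|V(G)|}=\frac{1}{|V(G)|}\sum_{j=1}^{p}|V_j|\,g_j(i)=\sum_{j=1}^{p}w_j\,g_j(i),
$$
and since $\sum_{j}|V_j|=|V(G)|$ the weights satisfy $w_j\ge 0$ and $\sum_{j}w_j=1$, so $f$ is exactly a mixture of discrete uniform distributions.

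The hard part will be the bookkeeping at the cut vertices rather than any deep estimate. A cut vertex belongs to several blocks yet contributes a single term to $\theta(c_i)$, so the naive sum $\sum_{j}\theta_j(c_i)$ over the \emph{full} blocks over-counts it and the associated coefficients $m_j/\sum_\ell m_\ell$ fail to capture $f$ correctly; the partition $V(G)=\bigsqcup_j V_j$ is introduced precisely to repair this. The two points to verify are therefore (i) that each truncated piece $V_j$ still carries a genuine discrete uniform law, which holds because any subset of a clique is again all-distinctly coloured, and (ii) that the weights form a probability vector, which follows from $\sum_j|V_j|=|V(G)|$. Once these are checked the displayed identity $f=\sum_j w_j g_j$ holds verbatim, and the theorem follows.
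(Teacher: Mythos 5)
Your proposal is correct, and it is in substance the argument the paper intends --- but the paper's own ``proof'' is only the one-line assertion that, by Theorem \ref{Thm-1.1}, the minimum proper colouring of every block follows a uniform distribution, whence the mixture claim. What you add is exactly what that one-liner silently skips: blocks overlap at cut vertices, so mixing the full per-block uniform laws with weights $m_j/\sum_\ell m_\ell$ over-counts each cut vertex and does \emph{not} reproduce $f(i)=\theta(c_i)/|V(G)|$; your partition $V(G)=V_1\sqcup\cdots\sqcup V_p$ obtained by rooting the block--cut tree and assigning each cut vertex to a single incident block repairs this, and then the identity $f=\sum_j w_j g_j$ with $w_j=|V_j|/|V(G)|$ is a genuine finite mixture. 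One small caveat you should state explicitly: the component law $g_j$ is uniform on whatever set of colours the clique piece $V_j$ happens to receive, which need not be an initial segment $\{1,2,\ldots,|V_j|\}$ (e.g.\ a $K_2$ block hanging off a vertex coloured $3$ in a $K_3$ block uses colours $\{1,3\}$), so ``discrete uniform'' must be read as uniform on an arbitrary finite set rather than the $DU(k)$ of Theorem \ref{Thm-1.1} in the strict sense; with that reading, which is clearly the intended one, your argument is complete and strictly more rigorous than the paper's.
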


An $n$-partite graph is a graph whose set of vertices can be partitioned in to $n$ subsets such that no two vertices in  the same partitions are adjacent. Then, we have the following result.

\begin{thm}
Let $G$ be a regular $k$-partite graph on vertices. Then, any minimal proper colouring of $G$ follows uniform distribution (in each partition). 
\end{thm}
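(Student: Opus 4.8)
The plan is to reduce the assertion to a one-line counting computation once the structural consequences of the hypothesis are isolated. I first read ``regular $k$-partite graph'' in the way the preceding remark on regular bipartite graphs suggests: the vertex set splits into $k$ independent parts $V_1,\dots,V_k$, and regularity forces a common part size $m$, so that $n=km$ and $m=n/k$. For the (balanced) complete $k$-partite graph this is immediate, since a vertex of $V_i$ has degree $n-|V_i|$; constancy of the degree across all vertices yields $|V_1|=\cdots=|V_k|=n/k$.

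Next I would show that in every minimum proper colouring the colour classes coincide with the parts. Colouring each part with its own colour is proper and uses $k$ colours, while one vertex taken from each part induces a copy of $K_k$; hence $\chi(G)=k$. The key observation is that every independent set of $G$ lies inside a single part $V_i$, because any two vertices in distinct parts are adjacent. Thus each of the $k$ colour classes of a minimum colouring is contained in some $V_i$; since these $k$ classes partition the vertices among the $k$ nonempty parts, a pigeonhole argument forces a bijection between classes and parts, so every part is monochromatic and receives a distinct colour.

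The counting step is then immediate. Each colour $c_i$ is assigned to exactly one full part, so $\theta(c_i)=m=n/k$ for every $i$, and the probability mass function of the associated random variable $X$ is $f(i)=\theta(c_i)/n=1/k$ for $i=1,\dots,k$. This is precisely the p.m.f.\ of the discrete uniform distribution $DU(k)$, in exact analogy with Theorem \ref{Thm-1.1}, which is the special case $m=1$. Because the argument uses only that the parts are independent sets of equal size that are mutually completely joined, it applies to \emph{every} minimal proper colouring, each of which merely permutes the colours among the parts.

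I expect the main obstacle to lie not in the arithmetic but in the precise reading of the hypothesis. Regularity alone does not force equal part sizes in an arbitrary (non-complete) $k$-partite graph, and for such graphs a minimum colouring need not respect the parts at all---the Petersen graph, for instance, is $3$-regular and $3$-chromatic yet admits no balanced $3$-colouring. The genuine content of the theorem therefore rests on the complete balanced multipartite structure, which is exactly what guarantees both the equal part sizes and the coincidence of colour classes with parts; making this hypothesis explicit is the crux of a rigorous proof.
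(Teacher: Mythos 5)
Your proposal is correct and ends at the same one-line count as the paper ($\theta(c_i)=n/k$ for each of $k$ colours, hence the $DU(k)$ p.m.f.), but it is substantially more careful than the paper's own proof, which consists of two bare assertions: that any minimal proper colouring of a $k$-partite graph uses $k$ colours, and that $r$-regularity gives $rk=n$. Both assertions are gaps that you either fill or correctly diagnose. First, a $k$-partite graph may have chromatic number below $k$ (every bipartite graph is also $3$-partite), and even when $\chi(G)=k$ the colour classes need not coincide with the parts; your lemma that in a \emph{complete} multipartite graph every independent set lies inside a single part, followed by the pigeonhole bijection between the $k$ colour classes and the $k$ parts, is exactly the missing step that legitimises $\theta(c_i)=|V_i|$. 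Second, the paper's relation $rk=n$ is inconsistent with reading $r$ as the vertex degree: in a balanced complete $k$-partite graph a vertex has degree $r=n-\frac{n}{k}$, so $rk=n(k-1)$; the relation $rk=n$ holds only if $r$ silently denotes the common part size, which your degree computation ($\deg v = n-|V_i|$ for $v\in V_i$, so regularity forces equal parts) makes honest. Finally, your Petersen graph remark is a genuine contribution rather than a pedantic aside: it is $3$-regular and $3$-chromatic on $10$ vertices, hence a regular $3$-partite graph in the literal sense, yet no proper $3$-colouring can be balanced since $3\nmid 10$ --- so the theorem as stated is false without the complete (or at least ``classes equal parts'') hypothesis that you make explicit. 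In short, your route and the paper's share the same trivial computational core, but yours proves what the paper merely assumes and identifies the hypothesis under which the statement is actually true.
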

\begin{proof}
Any minimal proper colouring of a $k$-partite graph contains $k$-colours. Let $G$ be an $r$-regular $k$-partite graph. Then, $rk=n$. Then, the \textit{p.m.f} of $G$ is 
$$f(i)=
\begin{cases}
\frac{1}{k}; & i=1,2,3,\ldots,k,\\
0; & \text{elsewhere}.
\end{cases}$$
which is that of the DU(k) distribution. 
\end{proof}

\begin{cor}
Let $G$ be a $k$-partite graph. Then, the $\chi$-chromatic mean (and $\chi^+$-chromatic mean) of $G$ is $\frac{k+1}{2}$ and the $\chi$-chromatic variance (and $\chi^+$-chromatic variance) of $G$ is $\frac{k^2-1}{12}$.
\end{cor}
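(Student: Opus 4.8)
The plan is to read off the corollary directly from the preceding theorem together with the standard moments of the discrete uniform distribution. That theorem establishes that any minimal proper colouring of a regular $k$-partite graph $G$ follows $DU(k)$, i.e.\ has p.m.f.\ $f(i)=\frac{1}{k}$ for $i=1,2,\ldots,k$; equivalently, every colour class has the common size $\theta(c_i)=\frac{n}{k}$, so that $\sum_{i=1}^{k}\theta(c_i)=n$. Once this is recorded, the mean and variance are completely determined, and the computation is identical to the one already performed for $K_n$ in Proposition \ref{Prop-1.2}.

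First I would substitute $\theta(c_i)=\frac{n}{k}$ into Definition \ref{Defn-1.1} to obtain $\cE(G)=\frac{1}{n}\sum_{i=1}^{k}i\cdot\frac{n}{k}=\frac{1}{k}\sum_{i=1}^{k}i=\frac{k+1}{2}$, using $\sum_{i=1}^{k}i=\frac{k(k+1)}{2}$. Then, applying Definition \ref{Defn-1.3} in the same manner, $\cV(G)=\frac{1}{k}\sum_{i=1}^{k}i^{2}-\left(\frac{k+1}{2}\right)^{2}=\frac{(k+1)(2k+1)}{6}-\frac{(k+1)^{2}}{4}$, which factors as $\frac{(k+1)(k-1)}{12}=\frac{k^{2}-1}{12}$ after collecting the common factor $(k+1)$ over a common denominator — exactly the simplification carried out there for the complete graph.

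The one point that needs a separate remark, rather than a calculation, is why the $\chi^{+}$-chromatic parameters equal the $\chi$-chromatic ones here. Because all colour classes share the size $\frac{n}{k}$, the colouring sum $\omega_{G}=\sum_{i=1}^{k}i\cdot\frac{n}{k}=\frac{n(k+1)}{2}$ is invariant under any relabelling of the colours; hence the minimum and the maximum colouring sums coincide, both extremal minimal colourings induce the same uniform p.m.f., and therefore $\dE(G)=\cE(G)=\frac{k+1}{2}$ and $\dV(G)=\cV(G)=\frac{k^{2}-1}{12}$. I do not anticipate any genuine obstacle: the argument is a routine specialisation of the uniform-distribution moments, and the only conceptual step is the label-invariance of the colouring sum, which is what collapses the two families of parameters into one.
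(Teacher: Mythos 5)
Your proposal is correct and takes essentially the same route as the paper, whose entire proof is the one-line observation that the corollary ``follows immediately'' from the uniform-distribution theorem; you merely spell out the $DU(k)$ moment computation (as in Proposition \ref{Prop-1.2} for $K_n$) and add the label-invariance remark that justifies $\dE(G)=\cE(G)$ and $\dV(G)=\cV(G)$, a point the paper leaves implicit. Note that you correctly carry the \emph{regularity} hypothesis from the theorem (equal class sizes $\theta(c_i)=\frac{n}{k}$), whereas the corollary as stated drops it — for an unbalanced $k$-partite graph such as $K_{1,3}$ the conclusion fails — so your version is, if anything, more careful than the paper's.
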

\begin{proof}
The proof follows immediately from the fact that the minimal proper colouring of a $k$-partite graph follows uniform distribution.
\end{proof}
 
Certain areas where these notions can be made use of are: nodes in communication and traffic networks.

 \section{Scope for Further Studies}

In this paper, we have extended the notions of mean and variance to the theory of graph colouring and determined their values for certain graphs and graph classes. More problems in this area are still open. 

The $\chi$-chromatic mean and variance of many other graph classes are yet to be studied. Determining the sum, mean and variance corresponding to the colouring of certain generalised graphs like generalised Petersen graphs, fullerence graphs etc. are some of the promising open problems. Studies on the sum, mean and variance corresponding to different types of edge colourings, map colourings, total colourings etc. of graphs also offer much for future studies.

We can associate many other parameters to graph colouring and other notions like covering, matching etc.  All these facts highlight a wide scope for future studies in this area.

\section*{Acknowledgement}

The first author of this article dedicates this paper to the memory Prof. (Dr.) D. Balakrishnan, Founder Academic Director, Vidya Academy of Science and Technology, Thrissur, India., who had been his mentor, the philosopher and the role model in teaching and research.

\end{document}